\newcommand{\Q}{\mathbb Q}
\newcommand{\Z}{\mathbb Z}
\renewcommand{\phi}{\varphi}
\newcommand{\eps}{\varepsilon}
\newcommand{\frakp}{\mathfrak p}
\newcommand{\frako}{\mathfrak o}
\newcommand{\bmx}{\left( \begin{matrix}}
\newcommand{\emx}{\end{matrix} \right)}
\newcommand{\End}{\mathrm{End}}
\DeclareMathOperator{\GL}{GL}
\DeclareMathOperator{\Ind}{Ind}
\newtheorem{lem}{Lemma}
\numberwithin{lem}{section}
\newtheorem{prop}[lem]{Proposition}
\newtheorem{thm}[lem]{Theorem}
\newtheorem{cor}[lem]{Corollary}
\theoremstyle{remark}
\newtheorem{rem}[lem]{Remark}
\theoremstyle{definition}
\numberwithin{equation}{section}
\begin{document}

\title{Local conductor bounds for modular abelian varieties}
\author{Kimball Martin}
\email{kimball.martin@ou.edu}
\address{Department of Mathematics, University of Oklahoma, Norman, OK 73019 USA}
\thanks{The author was partially supported by the Japan Society for the 
Promotion of Science (Invitational Fellowship L22540), and the
Osaka Central Advanced Mathematical Institute (MEXT Joint Usage/Research Center on Mathematics and Theoretical Physics JPMXP0619217849).}

\date{\today}

\begin{abstract}
Brumer and Kramer gave bounds on local conductor exponents for an
abelian variety $A/\Q$ in terms of the dimension of $A$ and the localization prime
$p$.  Here we give improved bounds in the case that $A$ has 
maximal real multiplication, i.e., $A$ is isogenous to a factor of the
Jacobian of a modular curve $X_0(N)$.  In many cases, these bounds are
sharp.  
The proof relies on showing that the rationality field of a newform for 
$\Gamma_0(N)$, and thus the endomorphism algebra of $A$, 
contains $\Q(\zeta_{p^r})^+$ when $p$ divides $N$ to
a sufficiently high power.  We also deduce that certain divisibility conditions
on $N$ determine the endomorphism algebra when $A$ is simple.
\end{abstract}

\maketitle


\section{Introduction}

Let $A/\Q$ be a $d$-dimensional abelian variety of conductor $N_A$.  
Brumer and Kramer \cite{brumer-kramer} gave upper bounds $v_p(N_A) \le B(p,d)$
on local conductor exponents  in terms of $p$ and $d$.  
Moreover, their bounds are sharp in the sense for any $p, d$, there exists
an abelian variety $A$ with $v_p(N_A) = B(p,d)$.
The abelian variety $A$ is said to have real multiplication 
(RM) if the endomorphism algebra $\End^0(A) := \End(A) \otimes \Q$ 
contains a totally real number field  $K \supsetneq \Q$.
Brumer and Kramer suggested that stronger bounds on local
conductor exponents may exist when restricting to abelian varieties
with RM.

We say $A$ has maximal RM if $\End^0(A)$ contains a totally
real field $K$ of degree $d = \dim A$. (Necessarily, $[K : \Q] \mid d$
for any subfield $K$ of $\End^0(A)$.)
 Abelian varieties with maximal RM are of GL(2)-type, 
 and they are simple over $\Q$ if and only if
$\End^0(A) \simeq K$ is a totally real field of degree $d$ \cite[Theorem 2.1]{ribet:korea}.  
Up to isogeny, the simple abelian varieties with maximal RM are precisely
the simple factors of Jacobians $J_0(N) = \mathrm{Jac}(X_0(N))$ of the modular curves $X_0(N)$.  If $N$ is minimal such that $A$ is isogenous
to a simple factor of $J_0(N)$, then $N_A = N^d$.

We improve the Brumer--Kramer bounds for
abelian varieties with maximal RM.  Define
\begin{equation} \label{eq:B0def}
 B_0(p,d) = \begin{cases}
8 + 2 v_2(d) & \text{if } p = 2, \\
5 + 2 v_3(d) & \text{if } p = 3, \\
4 + 2 v_p(d) & \text{if } p \ge 5 \text{ and } (p - 1) \mid 2d, \\
2 &\text{else}.
\end{cases} 
\end{equation}

\begin{thm} \label{thm:main}
Let $A/\Q$ be a $d$-dimensional simple abelian variety with maximal RM and conductor $N^d$. 
\begin{enumerate}
\item We have $v_p(N) \le B_0(p,d)$, i.e., $v_p(N_A) \le d B_0(p,d)$.
\item The bounds in (1) are stronger than what the Brumer--Kramer bounds imply.  Namely, $B_0(p,d) \le \lfloor \frac{B(p,d)}d \rfloor$ for all $p, d$,
and this is a strict inequality if either (a) $5 \le p < 2d + 1$ and $(p-1) \nmid 2d$,
or (b) $p \le 3$, $d > 3$ and $p \nmid d$.
It is an equality when $p \ge 2d+1$.
(In other cases, this is sometimes an equality and sometimes not.)
\item The bound in (1) sharp, i.e., $v_p(N) = B_0(p,d)$  occurs
for some such $A$, for all $(p,d)$ such that $d \le 10$, with the possible 
exclusion of the following 5 cases: $B_0(5,10) = 6$, $B_0(11,10) = 4$,
and $B_0(2d+1,d)=4$ for $d = 6,7,8$.
\end{enumerate}
\end{thm}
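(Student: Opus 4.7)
The plan is to use the description of simple abelian varieties with maximal RM given in the introduction: up to isogeny $A = A_f$ for a newform $f \in S_2(\Gamma_0(N))^{\new}$, where $[K_f : \Q] = d$ for $K_f = \Q(\{a_\ell(f)\})$ the Hecke field, and $\End^0(A) \cong K_f$. This reduces Theorem \ref{thm:main} to controlling the local conductor of $f$ at $p$ in terms of the arithmetic of $K_f$.

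For part (1), I would establish the embedding statement flagged in the abstract: if $v_p(N)$ is large, then $\Q(\zeta_{p^r})^+ \hookrightarrow K_f$ for an appropriate $r$, forcing $\tfrac{1}{2}\phi(p^r) \mid d$. The proof proceeds by analyzing the local automorphic representation $\pi_{f,p}$, which has trivial central character since $f$ has trivial nebentypus. Two structural cases appear: (i) ramified principal series $\pi_{f,p} \cong \mu \boxplus \mu^{-1}$ with $v_p(N) = 2c(\mu)$; (ii) dihedral supercuspidal induced from a character $\theta$ of $E^\times$ for a quadratic extension $E/\Q_p$. Galois equivariance of the local--global correspondence means that any $\sigma \in \Gal(\overline{\Q}/K_f)$ fixes $\pi_{f,p}$, forcing $\mu^\sigma \in \{\mu, \mu^{-1}\}$ in case (i), and an analogous statement for $\theta$ in case (ii). Hence $K_f \supseteq \Q(\mu)^+$, and since a character of $\Z_p^\times$ of conductor $p^{c(\mu)}$ has values generating a cyclotomic field whose $p$-part is $\Q(\zeta_{p^{c(\mu)-1}})$, one extracts $\Q(\zeta_{p^r})^+ \subset K_f$ for $r = c(\mu) - 1$. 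Combined with the parallel supercuspidal analysis (where ramified $E$ contributes further tame/wild terms to the local conductor), this yields the four cases of \eqref{eq:B0def}: for $p \ge 5$ with $(p - 1) \mid 2d$, the condition $\tfrac{1}{2}\phi(p^{r+1}) \mid d$ forces $r \le v_p(d) + 1$, hence $v_p(N) \le 2v_p(d) + 4$; the enlarged constants $5$ and $8$ at $p = 3$ and $p = 2$ come from the additional contribution of ramified quadratic $E$; and the residual bound $v_p(N) \le 2$ when $(p - 1) \nmid 2d$ follows because no ramified $\mu$ can satisfy $\Q(\mu)^+ \subset K_f$. This local embedding claim is the main technical obstacle.

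For part (2), the inequalities $B_0(p, d) \le \lfloor B(p, d)/d \rfloor$, together with the strict inequalities (a), (b) and the equality for $p \ge 2d + 1$, follow from a direct arithmetic comparison with the Brumer--Kramer formula; this is a routine case split on the range of $p$ relative to $d$, with no conceptual difficulty.

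Part (3) is computational: for each $(p, d)$ with $d \le 10$ outside the five listed exclusions, I would exhibit an explicit newform $f \in S_2(\Gamma_0(N))^{\new}$ of degree $d$ with $v_p(N) = B_0(p, d)$, drawing on the LMFDB or Magma's modular forms database. The excluded pairs involve conductors at or beyond the practical range of such enumerations; resolving them would require either extended tables or a direct construction (e.g., via explicit local-type twists of lower-level forms), and this remains the obstruction to a fully sharp statement.
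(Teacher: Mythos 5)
Your proposal follows essentially the same route as the paper: reduce via modularity to a newform $f \in S_2(N)$ of degree $d$, analyze the local representation $\pi_{f,p}$ (principal series vs.\ supercuspidal), extract $\Q(\zeta_{p^r})^+ \subset K_f$, and deduce the divisibility $\tfrac{1}{2}\varphi(p^r) \mid d$; parts (2) and (3) are then arithmetic comparison and computation. This is the paper's argument in outline. However, a few of your steps as written have genuine gaps.

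First, the reduction to a newform in $S_2(\Gamma_0(N))$ with \emph{trivial} nebentypus is not automatic from general modularity of GL(2)-type abelian varieties; the paper devotes a lemma to showing that if the nebentypus $\eps$ were nontrivial, $f$ would be a CM form whose CM character equals $\eps$, forcing $\eps$ to be odd, which is incompatible with weight 2. Without this step you only get $f \in S_2(N,\eps)$ and the principal-series/supercuspidal case analysis changes (as in the GL(2)-type remark, which gives a weaker bound at $p=2$).

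Second, your claim that a character $\mu$ of $\Z_p^\times$ of conductor $p^n$ yields $\Q(\zeta_{p^{n-1}})^+ \subset K_f$ is only correct for $p$ odd. For $p=2$, $(\Z/2^n\Z)^\times \simeq C_2 \times C_{2^{n-2}}$, so the 2-power part of the order of $\mu$ is only $\ge 2^{n-2}$, shifting $r$ down by one. More importantly, the constant $5 + 2v_3(d)$ at $p=3$ does not come from ``ramified $E$'' generically: for a ramified quadratic $E/\Q_3$ other than $\Q_3(\sqrt{-3})$, the order of $\theta$ is a multiple of $3^{n/2}$ and the bound is still $4 + 2v_3(d)$; it is precisely the extra cube roots of unity in $\Q_3(\sqrt{-3})$ that drop the exponent to $3^{n/2-1}$ and produce the $5$. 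This is the subtle case (a correction to \cite{DPT}) that your sketch does not isolate. Finally, for $p=2$ you also need to dispose of the non-dihedral (exceptional) supercuspidals, which the paper handles by noting their conductor exponent is at most $8$, making the bound vacuous there. With these repairs, your argument aligns with the paper's.
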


\begin{rem} \label{rem:GL2}
 If $A$ is a $d$-dimensional simple abelian variety of 
$\GL(2)$-type, i.e., it is isogenous to a factor of some $J_1(N) =
\mathrm{Jac}(X_1(N))$,
 then one similarly has the bounds $v_p(N_A) \le d B_0(p,d)$
for $p$ odd, and $v_2(N_A) \le d (B_0(2,d) + 1)$.  This improves
the Brumer--Kramer bounds for odd $p$, and for certain values of
$d$ when $p=2$.  See \cref{sec:GL2}.
\end{rem}

\begin{rem}
If $A$ is not simple, then applying the above bounds to its simple
factors yield even stronger bounds in terms of $d$.
E.g., suppose $A \simeq A_1 \times A_2$ where $A_1$ and $A_2$
are isogenous simple abelian varieties with endomorphism algebra
$K_0$, a totally real field of degree $d/2$.  Then 
$\End^0(A) \simeq M_2(K_0)$, which contains totally real fields of
degree $d$, and thus $A$ has maximal RM.  Since $N_A = N_{A_1}^2$,
one sees that $v_p(N_A) \le 2 v_p{N_{A_1}} \le d B_0(p,d/2)$.  This is
better than the bound $v_p(N_A) \le d B_0(p,d)$ for simple $A$.
\end{rem}

The precise formula for the Brumer--Kramer bounds is slightly
complicated---see \cref{sec:BK} for details---but we list the bounds $B'(p,d) := \lfloor \frac{B(p,d)}d \rfloor$ (i.e., the Brumer--Kramer bounds
applied to $v_p(N)$ with $N$ and $A$ as in \cref{thm:main}) for $d \le 10$
in \cref{tab:bounds}.  For $p > 2d+1$, $B'(p,d) = B_0(p,d) = 2$, and we omit these
entries.  We write the bound $B_0(p,d)$ from \cref{thm:main}
in parentheses when it is smaller.  

In this table, we bolded all of the cases where we have checked the upper 
bound is sharp, by finding associated modular forms.  We also starred
the cases $B_0(19,9)$ and $B_0(11,10)$ to indicate the upper bounds are
at least ``almost sharp'', in the sense that $v_p(N) = B_0(p,d) - 1$ is attained.
Note that by quadratic twisting at $p$, an upper bound of 
$B_0(p,d) = 2$ is always sharp, provided there exists at least one simple 
$d$-dimensional abelian variety $A/\Q$ with maximal RM.
It seems plausible that the bound $B_0(p,d)$ is always sharp, but we do not
know of constructions of simple abelian varieties with maximal RM (or more
generally of GL(2) type) in higher
dimensions which would shed light on this.

\begin{table} 
\begin{tabular}{r|l|l|l|l|l|l|l|l}
  & $p=2$ & 3 & 5 & 7 & 11 & 13 & 17 & 19 \\
  \hline
$d=1$ & \bf 8 & \bf 5 \\
2 & \bf 10 & \bf 5 & \bf 4 \\
3 & 9 ({\bf 8}) & \bf 7 & 3 ({\bf 2})  & \bf 4 \\
4 & \bf 12 & 6 ({\bf 5}) & \bf 4 & 3 ({\bf 2}) \\
5 & 11 ({\bf 8}) & 6 ({\bf 5}) & 4 ({\bf 2})  & 3 ({\bf 2})  & \bf 4 \\
6 & 11 ({\bf 10}) & \bf 7 & \bf 4 & \bf 4 & 3 ({\bf 2})  & 4 \\
7 & 10 ({\bf 8}) & 6 ({\bf 5}) & 4 ({\bf 2}) & 4 ({\bf 2})  & 3 ({\bf 2})  & 3 ({\bf 2})  \\
8 & {\bf 14} & 6 ({\bf 5}) & \bf 4 & 3 ({\bf 2})  & 3 ({\bf 2})  & 3 ({\bf 2})  & 4 \\
9 & 13 ({\bf 8}) & \bf 9 & 4 ({\bf 2}) & \bf 4 & 3 ({\bf 2})  & 3 ({\bf 2})  & 3 ({\bf 2})  & 4* \\
10 & 13 ({\bf 10}) & 8 ({\bf 5}) & 6 & 4  ({\bf 2})  &  4* & 3 ({\bf 2})  & 3 ({\bf 2})  & 3 ({\bf 2})  \\
\end{tabular}
\caption{Brumer--Kramer bounds $B'(p,d)$ for simple maximal RM, with the bounds $B_0(p,d)$ in parentheses when different; bounds known to be sharp (resp.\ almost sharp) are bolded (resp.\ starred)}
\label{tab:bounds}
\end{table}

The proof of part (1) relies on a result about rationality fields
of modular forms.  
Denote by $\zeta_m$ a primitive $m$-th root of unity in 
$\bar \Q$.  Let $\Q(\zeta_m)^+ = \Q(\zeta_m + \zeta_m^{-1})$ be the maximal
totally real subfield of $\Q(\zeta_m)$.  Write $K_f = \Q((a_n)_n)$ for the rationality
field of a cuspidal newform $f = \sum a_n q^n$.

Let $f \in S_{2k}(N)$ be a newform.  (Here $S_*(N)$ indicates level $\Gamma_0(N)$ with trivial nebentypus).  Then \cref{prop1} states that when $v_p(N)$
is sufficiently large, $K_f \supset \Q(\zeta_{p^r})^+$, where $r$ is a certain 
function of $v_p(N)$ that grows like $v_p(N)/2$.
Consequently, if we fix the (rationality) degree $d := [K_f : \Q]$ of $f$,
then this places bounds on $v_p(N)$, precisely $v_p(N) \le B_0(p,N)$.
Applying these bounds for a weight 2 newform $f$ associated to $A$
leads to the first part of the theorem.

\cref{prop1} is not entirely novel.  Brumer \cite[Theorem 5.5]{brumer}
proved a version of it in terms of abelian varieties of GL(2)-type,
which suffices to deduce \cref{rem:GL2}. 
However, to our knowledge, \cref{rem:GL2} has not appeared in the literature.
Moreover, \cref{prop1} is sharper for $p=2$, and our proof also
allows us to say something more about the local representations at $p=3$ (see \cref{cor:sc}).

Apart from sharpening Brumer's result when $p=2$, what is new
here is an explicit formulation of the bounds $B_0(p,d)$, a direct comparison
with the Brumer--Kramer bounds, and a computational investigation of 
whether they are sharp.
In fact, \cref{prop1} tells us more than just the bounds in \cref{thm:main}.
We explicitly spell out the stronger conclusions one can make for 
$2 \le d \le 6$.

\begin{prop} \label{prop2}
Let $A$ be a $d$-dimensional simple abelian variety with maximal RM
of conductor $N^d$.  Set $K =  \End^0(A)$, which is a totally 
real number field of degree $d$. 

\begin{enumerate}[(i)]

\item Suppose $d=2$.  If $2^9 \mid N$, then $K = \Q(\sqrt 2)$.
If $5^3 \mid N$, then $K = \Q(\sqrt 5)$.  In particular,
$2^9 \cdot 5^3 \mid N$ is impossible.

\item Suppose $d=3$.  If $3^6 \mid N$, then $K = \Q(\zeta_9)^+$.  
If $7^3 \mid N$, then $K = \Q(\zeta_7)^+$.  Hence
$3^6 \cdot 7^3 \mid N$ is impossible.

\item Suppose $d=4$.  If $2^{11} \mid N$, then $K = \Q(\zeta_{16})^+$.
If $2^9 \mid N$ (resp $5^3 \mid N$), then $K$ contains $\Q(\sqrt 2)$
(resp.\ $\Q(\sqrt 5)$).  Hence if $2^9 \cdot 5^3 \mid N$, then 
$K = \Q(\sqrt 2, \sqrt 5)$.  Further, $N$ cannot be divisible by
$2^{11} \cdot 5^3$.

\item Suppose $d = 5$.  If $11^3 \mid N$, then $K = \Q(\zeta_{11})^+$.

\item Suppose $d = 6$.  If $2^9 \cdot 3^6$, $2^9 \cdot 7^3$, $3^6 \cdot 5^3$,  $5^3 \cdot 7^3$, or $13^3$ divides $N$, then $K = \Q(\sqrt 2, \zeta_9 + 
\zeta_9^{-1})$, $\Q(\sqrt 2, \zeta_7 + \zeta_7^{-1})$, 
$\Q(\sqrt 5, \zeta_9 + \zeta_9^{-1})$, $\Q(\sqrt 5, \zeta_7 + \zeta_7^{-1})$,
or $\Q(\zeta_{13})^+$, respectively.
Further, $N$ cannot be divisible by $2^9 \cdot 5^3$,
$2^9 \cdot 13^3$, $3^6 \cdot 7^3$, $3^6 \cdot 13^3$, or $7^3 \cdot 13^3$.
\end{enumerate}
\end{prop}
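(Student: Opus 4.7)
The plan is to apply \cref{prop1} to the weight $2$ newform $f$ attached (via the isogeny to a factor of $J_0(N)$) to $A$, for which $K_f \simeq K$, and thereby translate each prime-power divisibility hypothesis on $N$ into a concrete totally real subfield of $K$. Reading off the relevant instances of \cref{prop1}: the hypothesis $2^9 \mid N$ yields $\Q(\zeta_8)^+ = \Q(\sqrt 2) \subset K$ and $2^{11} \mid N$ yields $\Q(\zeta_{16})^+ \subset K$ (degree $4$); the hypothesis $3^6 \mid N$ yields $\Q(\zeta_9)^+ \subset K$ (degree $3$); and for $p \in \{5,7,11,13\}$, $p^3 \mid N$ yields $\Q(\zeta_p)^+ \subset K$ (of degrees $2,3,5,6$ respectively). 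These are the only inputs I need from \cref{prop1}.

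Once these inclusions are in hand, the equality statements in (i)--(v) follow by degree matching: each identified subfield already has degree equal to $d = [K:\Q]$, so $K$ coincides with it. The compositum assertions in (iii) and (v), which identify $K$ explicitly as a compositum of two smaller fields, use the auxiliary fact that for distinct primes $p \ne q$, the fields $\Q(\zeta_{p^r})^+$ and $\Q(\zeta_{q^s})^+$ are ramified at disjoint sets of rational primes, hence are linearly disjoint over $\Q$, so that their compositum has degree equal to the product.

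The impossibility statements follow from the same linear-disjointness principle: in each case I compute the degree of the compositum forced into $K$ and observe it fails to divide $d$, contradicting $[K:\Q] = d$. Concretely, in (i) $\Q(\sqrt 2)\cdot\Q(\sqrt 5)$ has degree $4 \nmid 2$; in (ii) $\Q(\zeta_9)^+\cdot\Q(\zeta_7)^+$ has degree $9 \nmid 3$; in (iii) $\Q(\zeta_{16})^+\cdot\Q(\sqrt 5)$ has degree $8 > 4$; and in (v) the five listed compositums have degrees $4, 12, 9, 18, 18$ respectively, none dividing $6$.

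The only subtlety, and the place where one must be a little careful rather than just counting degrees, is the case $2^9 \cdot 13^3 \mid N$ for $d=6$: here disjoint ramification alone does not rule out coincidence since $\Q(\sqrt 2)$ is not a totally real cyclotomic field of odd prime conductor, so one must separately verify that $\Q(\sqrt 2) \not\subset \Q(\zeta_{13})^+$ before concluding the compositum has degree $12$. This holds because $\Q(\zeta_{13})^+$ is a cyclic sextic extension of $\Q$ whose unique quadratic subfield is $\Q(\sqrt{13})$ (as $13 \equiv 1 \pmod 4$), not $\Q(\sqrt 2)$. This ``non-coincidence of quadratic subfields'' check is the only non-mechanical step; after it, the entire proposition is degree counting sitting on top of \cref{prop1}.
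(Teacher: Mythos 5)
Your proposal is correct and follows essentially the same route as the paper: read off from \cref{prop1} the totally real cyclotomic subfield of $K_f \simeq \End^0(A)$ forced by each local conductor hypothesis, then combine these subfields using linear disjointness of cyclotomic fields of coprime conductor, comparing degrees with $d$ to pin down $K$ or derive a contradiction. One small correction to your final paragraph: the ``subtlety'' you flag for $2^9 \cdot 13^3$ is not actually there, since $\Q(\sqrt 2) = \Q(\zeta_8)^+$ is ramified only at $2$ while $\Q(\zeta_{13})^+$ is ramified only at $13$, and both are Galois over $\Q$, so their intersection is $\Q$ and the compositum automatically has degree $12$; identifying the quadratic subfield of $\Q(\zeta_{13})^+$ as $\Q(\sqrt{13})$ is a valid but unnecessary detour, and the underlying claim that ``disjoint ramification alone does not rule out coincidence'' is not correct for Galois extensions.
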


\begin{cor} \label{cor:gen2}
Let $C/\Q$ be a genus $2$ curve of conductor $N$.  Let $A$ be the Jacobian of $C$.
Suppose $C$ has RM (i.e., $A$ has RM).  
If $N$ is divisible by $5^6$ (resp.\ $2^{18}$), then $A$ is simple and
$\End^0(A) \simeq \Q(\sqrt 5)$ (resp.\  $\End^0(A) \simeq \Q(\sqrt 2)$).
\end{cor}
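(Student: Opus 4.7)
The plan is to reduce to \cref{prop2}(i) by first arguing that $A$ must be $\Q$-simple under either divisibility hypothesis. Since $A = \mathrm{Jac}(C)$ has dimension $d=2$ and, by hypothesis, $\End^0(A)$ contains a totally real field $K \supsetneq \Q$, the constraint $[K:\Q] \mid d$ forces $[K:\Q]=2$, so $A$ has maximal RM.

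I would then split on whether $A$ is $\Q$-simple. In the simple case, the discussion preceding \cref{thm:main} gives $N_A = M^2$, where $M$ is the minimal level with $A$ a factor of $J_0(M)$; since $N_A = N$, we have $v_p(N) = 2 v_p(M)$ for every prime $p$. Hence $5^6 \mid N$ implies $5^3 \mid M$, so \cref{prop2}(i) yields $\End^0(A) \simeq \Q(\sqrt 5)$; similarly $2^{18} \mid N$ implies $2^9 \mid M$ and hence $\End^0(A) \simeq \Q(\sqrt 2)$.

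To rule out the non-simple case, I would observe that $A$ is then $\Q$-isogenous to a product $E_1 \times E_2$ of elliptic curves over $\Q$. If $E_1 \not\sim E_2$, then $\End^0(A) \simeq \End^0(E_1) \times \End^0(E_2)$ is a product of two fields each of which is $\Q$ or imaginary quadratic, and no such product contains a real quadratic subfield, contradicting the maximal RM established above. Hence $E_1 \sim E_2 \sim E$ for some $E/\Q$, and $N_A = N_E^2$. The classical elliptic-curve conductor exponent bounds $v_2(N_E) \le 8$ and $v_5(N_E) \le 2$ then yield $v_2(N) \le 16$ and $v_5(N) \le 4$, contradicting $2^{18} \mid N$ or $5^6 \mid N$ respectively.

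The corollary is essentially a direct specialization of \cref{prop2}(i); the only nontrivial step is ruling out the non-simple case, but the elliptic-curve conductor bounds handle it with plenty of room to spare, so no serious obstacle is anticipated.
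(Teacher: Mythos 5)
Your proof is correct and follows essentially the same strategy as the paper: reduce to \cref{prop2}(i) once $A$ is known to be simple, and rule out the non-simple case via elliptic curve conductor exponent bounds. The only (inessential) difference is that you treat $E_1 \not\sim E_2$ separately by an endomorphism-algebra argument, whereas the paper handles both subcases at once by noting that $N = N_{E_1}N_{E_2}$ whether or not $E_1 \sim E_2$, so $v_2(N)\le 16$ and $v_5(N) \le 4$ follow directly from $v_2(N_{E_i}) \le 8$ and $v_5(N_{E_i}) \le 2$.
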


Parts (i)--(v) of \cref{prop2} also hold verbatim when $f \in S_{2k}(N)$
is a degree $d$ newform with $K=K_f$.  These statements are proved
in \cref{sec:restrRM}.

These consequences were initially quite striking to us for the following reason.  
One can  
specify a finite number of local discrete series components (and thus local 
conductors) of automorphic representations in the trace formula, 
and asymptotically count such representations (e.g., see \cite{weinstein}). 
Each of these fixed local components merely contribute
independent local densities (specified by the Plancherel measure) to
this asymptotic.
By analogy, one might guess that, when counting weight 2 newforms of
a fixed degree $d$ with a finite number of local conductors specified,
the asymptotic may be a product of local densities 
for each of these local conductors.  Indeed, the work \cite{SSW}
on counting elliptic curves suggests this is the case for $d=1$.

In contrast, there exist degree 2 newforms (or, if one prefers, genus 2 curves with RM) whose level is divisible by 125, and 
also degree 2 newforms with level divisible by 512.  A naive guess is that
each of these 
sets make up some positive proportions, say $\delta_{125}$ and $\delta_{512}$, of all degree 2 newforms,
and that the proportion of all degree 2 newforms with level divisible by 
$125 \cdot 512$ should be $\delta_{125} \cdot \delta_{512} > 0$.  
But the corollary tells us there are no such forms!  
 (In fact, \cite{CM} suggests 100\% of
all degree 2 forms may have rationality field $\Q(\sqrt 5)$, so it may be
that $\delta_{512} = 0$, but a local-global counting principle would
still suggest there should be infinitely many such forms.)
Hence \cref{prop2} and \cref{cor:gen2} say that, for $d \ge 2$,
local conductor behavior is not independent, at least in the case of 
``sufficiently wild'' ramification. 

\section{Rationality subfields for modular forms}

\begin{prop} \label{prop1}
Let $f \in S_{2k}(N)$ be a newform and fix a prime $p$.
Suppose $p^3 \mid N$ if $p$ is odd and $p^9 \mid N$
if $p=2$.   
Set  $r =  \lceil v_p(N)/2 - v_p(3)/2 \rceil - 1 - v_p(2)$.  
Then $\Q(\zeta_{p^r})^+ \subset K_f$.
In particular, $\frac 12 p^{r-1}(p-1) \mid [K_f : \Q]$.
\end{prop}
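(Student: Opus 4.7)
The plan is to analyze the local component $\pi_p$ of the automorphic representation attached to $f$, determine its structure from the conductor hypothesis, and then read off a subfield of $K_f$ from the trace of the matching local Galois representation.

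First I would classify $\pi_p$. Since $f$ has trivial nebentypus, $\pi_p$ has trivial central character, and the conductor hypothesis $v_p(N) \ge 3$ (or $\ge 9$ at $p=2$) forces $\pi_p$ to be either a ramified principal series $\pi(\mu,\mu^{-1})$ for some ramified character $\mu$ of $\Q_p^\times$, or a supercuspidal.  For $p$ odd, the supercuspidals of $\GL_2(\Q_p)$ are all dihedral, i.e.\ of the form $\pi_{E,\chi} = \Ind \chi$ induced from a quadratic extension $E/\Q_p$ and a character $\chi$ of $E^\times$ whose restriction to $\Q_p^\times$ is the quadratic character $\eta_{E/\Q_p}$.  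For $p=2$ one additionally must use the hypothesis $v_2(N) \ge 9$ to rule out the exceptional (non-dihedral) supercuspidal types of $\GL_2(\Q_2)$.

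Next I would invoke the standard conductor formulas $a(\pi_{E,\chi}) = f_E\, a(\chi) + d(E/\Q_p)$ and $a(\pi(\mu,\mu^{-1})) = 2\,a(\mu)$ to translate the hypothesis on $v_p(N)$ into a lower bound on the conductor exponent of $\chi$ (or $\mu$).  Carefully bookkeeping the different exponent $d(E/\Q_p)$ (which is $0$ or $1$ for $p$ odd, and $0, 2,$ or $3$ for $p=2$) and the gap between conductor and depth when $p$ equals the residue characteristic of $E$, one finds that $\chi$ (respectively $\mu$) must take at least one value that is a primitive $p^r$-th root of unity, for exactly the $r$ defined in the proposition.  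Then, fixing $\ell \ne p$ and letting $\rho_f$ be the associated $\ell$-adic Galois representation, the local Langlands match gives $\rho_f|_{W_{\Q_p}} \simeq \Ind_{W_E}^{W_{\Q_p}} \chi$ (or $\mu \oplus \mu^{-1}$).  By Carayol-type compatibility, traces of $\rho_f$ on Weil-group elements lie in $K_f$ itself.  Evaluating on a suitable norm-one element $u \in E^\times$ (so that the nontrivial $\Gal(E/\Q_p)$-conjugate of $u$ is $u^{-1}$), the trace has the form $\chi(u) + \chi(u)^{-1}$ with $\chi(u)$ a primitive $p^r$-th root of unity, and this sum generates $\Q(\zeta_{p^r})^+$ over $\Q$; the principal series case is analogous with $\mu$ in place of $\chi$.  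Hence $\Q(\zeta_{p^r})^+ \subseteq K_f$, and the divisibility $\tfrac 12 p^{r-1}(p-1) \mid [K_f:\Q]$ follows from $[\Q(\zeta_{p^r})^+:\Q] = \tfrac 12 \phi(p^r)$.

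The hardest step will be the $p=2$ case: one must rule out extraordinary (non-dihedral) supercuspidals under $v_2(N) \ge 9$, track the different of each of the three quadratic extensions of $\Q_2$, and correctly account for the discrepancy between the conductor of $\chi$ and the largest index $m$ for which $\chi$ is nontrivial on $1 + \frakp_E^m$.  This last discrepancy is precisely what the $-v_p(2)$ correction in the definition of $r$ is designed to capture, and it is what allows the $p=2$ bound to be sharper than the one implicit in Brumer's original approach.  The $-v_p(3)/2$ correction plays an analogous (milder) role at $p=3$, reflecting the wild ramification of the ramified quadratic extension of $\Q_3$.
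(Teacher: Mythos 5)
Your plan follows essentially the same route as the paper: classify $\pi_p$ from the conductor hypothesis (ruling out the exceptional non-dihedral supercuspidals at $p=2$ using $v_2(N)\ge 9$), extract a lower bound on the order of the inducing character from the conductor formula, then transport this to $K_f$ via traces of the associated $\ell$-adic Galois representation. The paper packages the final trace step by citing Lemmas~3.1 and~3.2 of Dieulefait--Pacetti--Tsaknias rather than spelling out the norm-one-element trace computation, but the underlying idea is the same.

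One correction to your explanatory narrative, which matters because it is exactly where the proof has genuine content: the $-v_p(3)/2$ term does \emph{not} reflect wild ramification. The ramified quadratic extensions of $\Q_3$ are tamely ramified (since $e=2$ is coprime to $p=3$, the different exponent is $1$). The weakening comes specifically from the single extension $E=\Q_3(\sqrt{-3})=\Q_3(\zeta_3)$, where the presence of $\zeta_3$ in $E$ changes the group structure of $\frako_E/\frakp_E^j$ and costs a factor of $3$ in the lower bound on the order of $\theta$. The paper observes that the cited reference misses exactly this case (in their Lemma~2.11) and corrects it; calling this step ``careful bookkeeping'' undersells it, since that correction is the nontrivial technical content here. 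Similarly, the $-v_p(2)$ term arises from the non-cyclicity of $(\Z/2^n\Z)^\times$ (and the analogous unit-filtration groups in the ramified quadratic extensions of $\Q_2$), not from a discrepancy between the conductor of $\chi$ and the largest index on which $\chi$ is nontrivial --- that gap is always exactly $1$.
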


The lower bound of 3 or 9 on $v_p(N)$ in the hypothesis is the
minimum needed to get a nontrivial conclusion, except when
$p=3$ where one needs $3^6 \mid N$.

This proposition refines earlier results of Saito and Brumer.
Namely, \cite[Corollary 3.4]{saito} uses certain operators
on $S_{2k}(N)$ to obtain a weaker version of this proposition with 
$r = \lfloor v_p(N)/3 \rfloor - v_p(2)$.  In the case $2k=2$,
 \cite[Theorem 5.5]{brumer} obtains a version of this result 
 in the context of abelian varieties of $\GL(2)$-type
with $r = \lceil v_p(N)/2 - 1 - \frac 1{p-1} \rceil - v_p(2)$.\footnote{There is a typo in the $p=2$ case
 of \cite[Theorem 5.5(i)]{brumer}---the field in the $p=2$ case should read as in part (ii) of \emph{loc.\ cit.}}
Brumer remarks that his approach also applies to higher weights.
This is almost the same as our value of $r$, but is smaller by 1 when
$p=2$ and $v_2(N)$ is odd.
E.g., when $p=2$, Brumer's result only implies 
$\Q(\sqrt 2) \subset K_f$ if $v_2(N) \ge 10$, but this proposition
says $v_2(N) \ge 9$ suffices.
So this proposition should be viewed as a slight sharpening
of Brumer's result when $p=2$.

We will prove this proposition by examining local Galois
types of modular forms, following \cite{DPT}.  In fact, the proof
amounts to making some arguments in \cite{DPT} slightly more precise,
and correcting an error therein.
The basic idea is the same as Brumer's: show
$\zeta_{p^r} + \zeta_{p^r}^{-1}$ appears as a trace of an
appropriate representation.  However, our proof uses
an explicit description of possible representations, and we 
obtain slightly more information when $p=2, 3$.

In fact neither Saito's nor Brumer's result, nor the argument we
give, requires trivial nebentypus.  We assume it for simplicity
so that we can directly apply \cite{DPT}.  

\begin{proof}
Let $\pi_p$ be the smooth irreducible representation of $\GL_2(\Q_p)$
associated to $f$.  Since we assumed $v_p(N) \ge 3$, either $\pi_p$
is an irreducible principal series representation or a supercuspidal 
representation (i.e., twisted Steinberg is not possible).
For a local representation $\sigma$,
we write $c(\sigma)$ for its conductor.

First assume $\pi_p = \pi(\mu, \mu^{-1})$ is an irreducible principal
series.  Then $c(\pi_p) = 2n$, where $n = c(\mu)$.  Now
$\mu$ factors through $\Z_p^\times / (1 + p^n \Z_p^\times) \simeq
(\Z/p^n\Z)^\times$, which is cyclic if $p$ is odd and
abstractly isomorphic to $C_2 \times C_{2^{n-2}}$ if $p=2$.  Since 
$\mu |_{\Z_p^\times}$ does not factor though $1 + p^{n-1} \Z_p^\times$,
necessarily the order of $\mu |_{\Z_p^\times}$ is a multiple of
$p^{n-1-v_2(p)}$.  Now \cite[Lemma 3.1]{DPT} implies that
$\zeta_{p^r} + \zeta_{p^r}^{-1} \in K_f$, where $r = n-1-v_2(p) = v_p(N)/2 - 1 - v_2(p)$.
(This lemma is proved by looking at the images of the associated
$\ell$-adic Galois representations $\rho_{f,\lambda}$.)
This proves the proposition in the principal series case.

Now assume $\pi_p$ is supercuspidal.  All the details we will
need about supercuspidal representations are recalled or proved
in \cite[Section 2]{DPT}, with a correction noted below.  
Necessarily, $\pi_p$ is dihedral if $p$ is odd.
If $p=2$, there are non-dihedral supercuspidal representations $\pi$, but 
they have conductor exponent $c(\pi) \le 8$ (in fact $\le 7$), 
in which case the proposition is vacuous.  Thus for any
$p$ we may assume that $\pi_p$ is dihedral.  This means that $\pi = \pi_p$
corresponds to the induction $\Ind_{W_{\Q_p}}^{W_E} \theta$ of a regular character $\theta$ of the Weil group of some quadratic extension $E/\Q_p$.  
Write $n = c(\theta)$.

First suppose $E/\Q_p$ is unramified.  Then $c(\pi) = 2n$.
As explained in the proof of \cite[Lemma 2.11]{DPT}, the order of
$\theta$ is a multiple of $p^{n-1-v_2(p)}$.

Next suppose $E/\Q_p$ is ramified with $p$ odd.  Then $c(\pi) = n+1$
and $n \ge 2$.  \emph{Loc.\ cit.}\ states that $n$ must be even and
the order of $\theta$ is a multiple of $p^{n/2}$.  This is correct except
in the special case that $E \simeq \Q_3(\sqrt{-3})$, where we can only 
say that the order of $\theta$ is a multiple of $3^{n/2-1}$.  
The difference in the exponent in this case arises from difference in the 
structure of $\frako_E/\frakp_E^j$ when $E \simeq \Q_3(\sqrt{-3})$ due to the presence of extra roots of unity.  The structure of $\frako_E/\frakp_E^j$ 
is described in \cite[Theorem 2.10]{DPT} 
and implies the above
fact about the order of $\theta$.
(We remark that there is a typo in this theorem: $a$ and $b$ should be swapped in third line of data in \cite[Table 2]{DPT}, which corresponds to $E \ne \Q_3(\sqrt{-3})$ is ramified with $p \ne 2$, and gives the exponent $\frac n2$ in this case.
Neither this typo nor the above correction affect the other proofs or results in \cite{DPT}.)

Finally suppose $p=2$ and $E/\Q_2$ is ramified.  Then $c(\pi) = n + \delta$,
where $\delta \in \{ 2, 3 \}$ is the 2-adic valuation of the discriminant of $E$.  
As remarked above, when $p=2$, we may assume $c(\pi) = n+ \delta \ge 9$,
in which case $n$ is necessarily even.  When $\delta = 2$,
i.e., $E \simeq \Q_2(\sqrt{-1})$ or $\Q_2(\sqrt 3)$, then $\theta$ has
order a multiple of $2^{n/2-1}$.  If $\delta = 3$, i.e.,
$E \simeq \Q_2(\sqrt{\pm 2})$ or $\Q_2(\sqrt{\pm 6})$, then
$\theta$ has order a multiple of $2^{n/2}$.

Thus in all dihedral supercuspidal cases of relevance, the order of $\theta$ 
is a multiple of $p^r$, where $r$ is as in the statement of the proposition.  Now 
\cite[Lemma 3.2]{DPT} implies $\zeta_{p^r} + \zeta_{p^{-r}} \in K_f$,
as desired.
\end{proof}

\begin{cor} \label{cor1}
Suppose $f \in S_{2k}(N)$ is a newform of degree $d$.
Then $v_p(N) \le B_0(p,d)$, where $B_0(p,d)$ is given in \eqref{eq:B0def}.
\end{cor}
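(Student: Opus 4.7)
The plan is to deduce the bound directly from Proposition~\ref{prop1}, translating the divisibility $\frac{1}{2} p^{r-1}(p-1) \mid [K_f : \Q] = d$ into an upper bound on $v := v_p(N)$ via the explicit formula for $r$. First I would dispense with the ``low-ramification'' regime where Proposition~\ref{prop1} is vacuous or trivial. For $p = 2$ and $v \le 8$, or for $p$ odd and $v \le 2$, the bound $v \le B_0(p,d)$ is immediate from \eqref{eq:B0def}, since $B_0(2,d) \ge 8$ and $B_0(p,d) \ge 2$ in general. The same holds in the case $p \ge 5$ with $(p-1) \nmid 2d$: if $v \ge 3$ then $r \ge 1$ and $[\Q(\zeta_p)^+ : \Q] = (p-1)/2$ divides $d$, contradicting $(p-1) \nmid 2d$; so $v \le 2 = B_0(p,d)$ in that subcase.

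In the remaining cases I would factor the divisibility using $\gcd(p, p-1) = 1$. The condition $\frac{1}{2} p^{r-1}(p-1) \mid d$ decomposes into a $p$-primary constraint (which reduces to $r \le v_p(d) + 1$ for $p$ odd, and $r \le v_2(d) + 2$ for $p = 2$) together with the coprime-to-$p$ constraint $(p-1) \mid 2d$. Plugging the explicit formula $r = \lceil v_p(N)/2 - v_p(3)/2 \rceil - 1 - v_p(2)$ from Proposition~\ref{prop1} into the relevant inequality and solving for $v$ yields, respectively, $v \le 2 v_p(d) + 4$ for $p \ge 5$ with $(p-1) \mid 2d$, $v \le 2 v_3(d) + 5$ for $p = 3$, and $v \le 2 v_2(d) + 8$ for $p = 2$. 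These are precisely the four branches of \eqref{eq:B0def}.

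The argument is largely bookkeeping; the only step requiring a small amount of care is handling the ceiling function together with the offsets $v_p(2)$ and $v_p(3)$ in the definition of $r$, which is what distinguishes the constants $8$, $5$, and $4$ appearing in the three main branches of $B_0(p,d)$. No genuine obstacle arises once Proposition~\ref{prop1} is in hand.
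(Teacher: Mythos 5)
Your proposal is correct and follows essentially the same path as the paper's proof: invoke Proposition~\ref{prop1}, extract the divisibility $\frac 12 p^{r-1}(p-1)\mid d$ (so $(p-1)\mid 2d$ and $r-1\le v_p(d)$ for odd $p$, $r-2\le v_2(d)$ for $p=2$), and substitute the explicit formula for $r$ to solve for $v_p(N)$. The paper's version is terser (it only writes out $p\ge 5$ and says ``same reasoning'' for $p=2,3$), while you spell out the case split and the offset constants more explicitly; both implicitly wave past the regime $p=3$, $3\le v_3(N)\le 5$ where $r\le 1$ makes the divisibility statement degenerate, but there the bound $v_3(N)\le 5\le B_0(3,d)$ is automatic, so nothing is lost.
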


\begin{proof} 
Suppose $p \ge 5$.  Since $B_0(p,d) \ge 2$, we may assume
$v_p(N) \ge 3$.  Then \cref{prop1} implies $\frac 12 p^{r-1}(p-1) \mid d$,
where $r = \lceil \frac{v_p(N)} 2 \rceil - 1 \ge 1$.  Hence $\frac {p-1}2 \mid d$
and $\lceil \frac{v_p(N)} 2 \rceil  -  2 \le v_p(d)$.  Thus $v_p(N) \le B_0(p,d)$.

The same reasoning gives the result for $p = 2$ and $p=3$.
\end{proof}

We point out one more consequence of the proof of \cref{prop1}.
Given a newform $f \in S_{2k}(N)$, denote by $\pi_p(f)$ the
local admissible representation of $\GL_2(\Q_p)$ associated to $f$.
An algorithm for determining $\pi_p(f)$ is given in \cite{LW}.  
The main difficulty is distinguishing 
supercuspidal representations, which \cite{LW} carries out using the 
cohomology of the modular curve.
Simply knowing $v_p(N)$ places many restrictions on the possibilities
for $\pi_p(f)$---in particular if $v_p(N) \ge 3$ is odd, $\pi_p(f)$ must be
supercuspidal.    The following gives an elementary
way to partially distinguish supercuspidal components using $K_f$ when $p=3$.

\begin{cor} \label{cor:sc}
Let $f \in S_{2k}(N)$ be a newform and suppose
$3^{2m+1} \parallel N$ with $m \ge 1$.  
If $\Q(\zeta_{3^m})^+ \not \subset K_f$, then $\pi_3(f)$
is a supercuspidal representation dihedrally induced from 
$E = \Q_3(\sqrt{-3})$.
\end{cor}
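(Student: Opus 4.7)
The plan is to revisit the case analysis inside the proof of \cref{prop1} and track which cases produce an odd conductor exponent versus give the desired cyclotomic subfield.

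First I would observe that under the hypothesis $v_3(N) = 2m+1 \ge 3$, the local component $\pi_3(f)$ cannot be (twisted) Steinberg: an unramified twist of Steinberg has conductor $1$, and a ramified twist $\chi \mathrm{St}$ has conductor $2 c(\chi) \ge 2$, which is even. It also cannot be a principal series $\pi(\mu_1,\mu_2)$: if both characters are unramified the conductor is at most $1$, and otherwise the conductor is $c(\mu_1)+c(\mu_2)$ with both ramified of the same conductor after taking central character into account (trivial nebentypus forces $\mu_2 = \mu_1^{-1}$), so the conductor $2c(\mu_1)$ is again even. Hence $\pi_3(f)$ must be supercuspidal, and since $p=3$ is odd, it is dihedral: $\pi_3(f) = \pi(\theta)$ for a regular character $\theta$ of $W_E$ for some quadratic extension $E/\Q_3$.

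Next I would enumerate the possibilities for $E$ and match conductors, using the same facts invoked in the proof of \cref{prop1}. If $E/\Q_3$ is unramified, then $c(\pi_3(f)) = 2n$ is even, contradicting $v_3(N)=2m+1$; so $E/\Q_3$ must be ramified. In the ramified case $c(\pi_3(f)) = n+1$ with $n$ even, so $n = 2m$. There are two subcases: $E \ne \Q_3(\sqrt{-3})$, or $E = \Q_3(\sqrt{-3})$.

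The key step is to rule out the subcase $E \ne \Q_3(\sqrt{-3})$ under the hypothesis $\Q(\zeta_{3^m})^+ \not\subset K_f$. For this, I would quote the proof of \cref{prop1}: when $p=3$, $E$ is ramified, and $E \ne \Q_3(\sqrt{-3})$, the order of $\theta$ is a multiple of $3^{n/2} = 3^m$. Then \cite[Lemma 3.2]{DPT} forces $\zeta_{3^m}+\zeta_{3^m}^{-1} \in K_f$, i.e., $\Q(\zeta_{3^m})^+ \subset K_f$, contrary to assumption. Hence $E \simeq \Q_3(\sqrt{-3})$, as claimed. The main obstacle is really only a bookkeeping one, namely making sure the parity and $v_3$ arithmetic isolates the $\Q_3(\sqrt{-3})$ case cleanly; all of the substantive local representation theory has already been done in \cref{prop1}, so this becomes essentially a matter of extracting the correct contrapositive.
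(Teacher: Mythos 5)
Your proposal is correct and follows essentially the same route as the paper: use the odd conductor exponent to force $\pi_3(f)$ to be dihedral supercuspidal induced from a ramified quadratic $E/\Q_3$, then appeal to the lower bound on the order of $\theta$ established in the proof of \cref{prop1} to see that $E \ne \Q_3(\sqrt{-3})$ would force $\Q(\zeta_{3^m})^+ \subset K_f$. The paper's proof is terser (it simply asserts the dihedral-ramified reduction and then invokes ``the difference in exponents''), whereas you spell out the elimination of the Steinberg, principal series, and unramified dihedral cases; the content is the same.
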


\begin{proof}
Since $c(\pi_3(f))=2m+1 \ge 3$ is odd, $\pi_3(f)$ must be a dihedral supercuspidal induced from a character $\theta$ along a ramified quadratic 
extension $E/\Q_3$.  Now the corollary follows from the difference in exponents for our lower bounds for the order of $\theta$ in the cases $E \not \simeq \Q_3(\sqrt{-3})$
and $E \simeq \Q_3(\sqrt{-3})$.
\end{proof}

For instance, up to Galois conjugacy, $S_2(243)$ has 2 rational newforms,
1 newform with rationality field $\Q(\sqrt 3)$, 1 newform with rationality field
$\Q(\sqrt 6)$, and 2 newforms with rationality field $\Q(\zeta_9)^+$.  
The 4 Galois orbits of newforms with rationality fields of degree $\le 2$ 
must all have local components $\pi_3$ being dihedral supercuspidal 
representations induced from $\Q_3(\sqrt{-3})$.

\section{Local conductor bounds for modular abelian varieties} \label{sec3}

In this section we will prove the results stated in the introduction.

Given a weight 2 newform $f$ for $\Gamma_0(N)$ or $\Gamma_1(N)$,
there is an associated simple abelian variety $A_f$ which is a subquotient
of $J_0(N)$ or $J_1(N)$.  
Moreover, $\dim A_f = [K_f:\Q]$, $\End^0(A_f) \simeq K_f$ and $L(s,A) = 
\prod L(s, f^\sigma)$, where $f^\sigma$ runs over the Galois conjugates of
$f$.  We say an abelian variety $A/\Q$ corresponds to a weight 2 newform
$f$ if it is isogenous to $A_f$ (over $\Q$), in which case we say $A$ is
modular.  Recall that isogenies preserve endomorphism algebras.

\begin{lem} \label{lem:modularity} 
Suppose $A/\Q$ is a simple $d$-dimensional abelian
variety with maximal RM.  
Then $A$ corresponds to a newform $f \in S_2(N)$, where $N^d$ is the
conductor of $A$.
\end{lem}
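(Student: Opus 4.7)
The plan is to combine three ingredients: Ribet's structure theorem for simple abelian varieties with maximal RM, the modularity theorem for $\GL(2)$-type abelian varieties over $\Q$, and the multiplicativity of conductors under the Eichler--Shimura decomposition.

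First, since $A$ is simple of dimension $d$ and $\End^0(A)$ contains a totally real field $K$ of degree $d$, the theorem of Ribet \cite[Theorem 2.1]{ribet:korea} (already cited in the introduction) yields $\End^0(A) \simeq K$, so $A$ is of $\GL(2)$-type. I would next invoke the modularity of simple $\GL(2)$-type abelian varieties over $\Q$, i.e., Ribet's conjecture, proved as a consequence of Serre's modularity conjecture by Khare--Wintenberger and Kisin: $A$ is isogenous over $\Q$ to $A_g$ for some weight 2 newform $g \in S_2(\Gamma_1(M), \chi)$ of some level $M$ and nebentypus $\chi$.

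Second, I would verify that the nebentypus $\chi$ is trivial, so that $g \in S_2(\Gamma_0(M))$. Since isogenies preserve endomorphism algebras, $K_g \simeq \End^0(A_g) \simeq K$ is totally real. The Hecke relation $a_{\ell^2}(g) = a_\ell(g)^2 - \chi(\ell)\ell$ for primes $\ell \nmid M$ places $\chi(\ell) \in K_g$, and since the values of $\chi$ are roots of unity inside a totally real field, they lie in $\{\pm 1\}$. To exclude a nontrivial quadratic $\chi$, I would compare determinants of the associated compatible $\lambda$-adic systems: the determinant of the system attached to $A$ (coming from the Weil pairing on $T_\ell A$) is the cyclotomic character, whereas that attached to $g$ is $\chi$ times the cyclotomic character; equality forces $\chi = 1$.

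Finally, the conductor equality $N_A = N^d$ (with $N = M$) follows from the Eichler--Shimura decomposition
\[
L(s, A_f) = \prod_{\sigma \colon K_f \hookrightarrow \bar\Q} L(s, f^\sigma),
\]
combined with the fact that Galois-conjugate newforms share local components, so $v_p(N_A) = d \cdot v_p(N)$ for every prime $p$; conductors are isogeny invariants, so this passes from $A_f$ to $A$. The main technical input is the deep modularity theorem for $\GL(2)$-type abelian varieties over $\Q$; the only subtlety I expect in the remaining steps is verifying that the nebentypus is trivial (as opposed to merely quadratic), with the rest being formal bookkeeping.
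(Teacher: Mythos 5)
Your proposal is correct and reaches the conclusion, but you take a genuinely different route from the paper at the key step of showing the nebentypus is trivial. Both you and the paper begin the same way: invoke Ribet's structure theorem to conclude $\End^0(A)\simeq K$ is a totally real field of degree $d$, then apply modularity of $\GL(2)$-type abelian varieties (the paper cites \cite[Theorem 4.4]{ribet:korea} plus the Khare--Wintenberger/Kisin proof of Serre's conjecture) to get $A\sim A_g$ for some newform $g\in S_2(M,\chi)$, and observe that $\chi$ takes values in the totally real field $K_g\simeq K$, hence is quadratic. The paper then excludes nontrivial quadratic $\chi$ by CM theory: since $K_g$ is totally real, complex conjugation fixes the Fourier coefficients, forcing $g\otimes\chi=g$, so $g$ is a CM form induced from an imaginary quadratic field $F$ and $\chi=\chi_F$ must be odd; but the nebentypus of a weight $2$ form is even, contradiction. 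You instead argue via determinants of the $\lambda$-adic systems: the Weil pairing (together with a polarization) makes $\det\rho_{A,\lambda}$ cyclotomic, while $\det\rho_{g,\lambda}=\chi\cdot\chi_\ell$, forcing $\chi=1$. This is a valid alternative. The one substantive point your write-up leaves implicit is \emph{why} the Weil pairing yields $\det\rho_{A,\lambda}=\chi_\ell$: one must note that the Rosati involution on $\End^0(A)=K$ is positive and hence (by the Albert classification, Type~I) the identity when $K$ is totally real, so the polarization pairing descends to a perfect alternating $K_\lambda$-bilinear pairing on each $V_\lambda$ with values in $K_\lambda(1)$. Once that is supplied, your determinant argument is complete. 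The CM-theoretic route in the paper is perhaps more elementary given standard facts about CM newforms, while your argument is more structural and makes transparent exactly where total reality of $K$ enters (through the Rosati involution); the final conductor bookkeeping is identical in both.
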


This lemma together with \cref{cor1} implies part (1) of \cref{thm:main}.

As we do not know a reference that explicitly concludes the newform $f$
must have trivial nebentypus, we provide a proof.

\begin{proof}
Since Serre's conjecture is known, \cite[Theorem 4.4]{ribet:korea} implies
that $A$ is a simple factor of $J_1(N)$.  Thus $A$ is isogenous to
an abelian variety $A_f$ associated to a newform $f \in S_2(M, \eps)$
for some $M \mid N$ and nebentypus $\eps$.  Comparing conductors shows
$M = N$.  Now the rationality field of $f$ equals $K = \End_0(A)$, and thus
contains the values of $\eps$.  Hence $\eps$ is quadratic.  If $\eps = 1$,
then $f$ is a newform in $S_2(N)$, as desired.

We will use several facts about CM forms, all of which can be found in  \cite{ribet:survey}.
Suppose $\eps$ is nontrivial. 
Then $f$ has CM by $\eps$, i.e., $f  \otimes \eps = f$.  We claim this is 
impossible because $f$ has weight 2.

More generally, take a newform $f \in S_k(N, \eps)$ of any weight $k \ge 2$,
and assume $f$ has CM by $\eps$.
Then $f$ is induced from a Grossencharacter $\psi$ of an imaginary
quadratic field $F$, and $f$ has CM by the quadratic Dirichlet character
$\chi_F$ attached to $f$.  Since the character of CM is unique, $\eps = \chi_F$.  This forces $\eps$ to be odd, as $F$ is imaginary quadratic.
But because $\eps$ is also the nebentypus character, we must have
$\eps(-1) = (-1)^k$.  Hence $k$ is odd, proving the claim.
\end{proof}

\subsection{Comparison with Brumer--Kramer bounds} \label{sec:BK}
Suppose $A/\Q$ is a $d$-dimensional simple factor of the new part 
of $J_0(N)$.  Thus $A$ has conductor $N^d$.  Put $e = v_p(N)$.
Brumer and Kramer's  conductor exponent bounds for abelian varieties 
\cite[Theorem 6.2]{brumer-kramer} states that
\[ d v_p(N) \le B(p,d) := 2d +   p t + (p-1) \lambda_p(t), \]
where $t =  \lfloor 2d/(p-1) \rfloor$ and $\lambda_p$ is defined by
\[ \lambda_p(m) = \sum_{i=0}^s i c_i p^i, \quad \text{for } m = \sum_{i=0}^s c_i p^i,
\quad 0 \le c_i < p. \]
Let us rewrite this as 
\[ v_p(N) \le B'(p,d) := 2 + \lfloor \frac{pt + (p-1) \lambda_p(t)}d \rfloor.  \]
Thus $B'(p,d)$ can be thought of as the Brumer--Kramer bound for abelian varieties of GL(2)-type.
We want to compare $B'(p,d)$ with $B_0(p,d)$.

\begin{lem}
First suppose $p \ge 5$.   When $p \ge d$, $B'(p,d)$ is given by
\[ B'(p,d) = \begin{cases}
2 & p > 2d +  1 \\
4 & p = 2d + 1 \text{ or } p= d \\
3 & d + 1 < p  <  2d + 1
\end{cases} \]
If $5 \le p \le d$, then $B'(p,d) \ge 3$.

When $p=3$, we have $B'(3,1) = B'(3,2) = 5$ and $B'(3,d) \ge 6$ for $d \ge 3$.

When $p = 2$, we have $B'(2,1) = 8$, $B'(2,2) = 10$, $B'(2,3) = 9$ and
$B'(2,d) \ge 9$ for $d \ge 4$.

For any $p$, if $(p-1) \mid 2d$, then 
$B'(p,d) \ge 4 + 2v_p(d) + 4v_2(p) + v_2(3)$, with equality when
$\frac{2d}{p^{v_p(2d)}(p-1)} < p$.
\end{lem}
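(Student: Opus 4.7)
The plan is to unpack the definition
\[ B'(p,d) = 2 + \left\lfloor \frac{pt + (p-1)\lambda_p(t)}{d} \right\rfloor, \qquad t = \left\lfloor \frac{2d}{p-1} \right\rfloor, \]
and then proceed by casework on $p$ versus $d$, together with an elementary base-$p$ digit analysis of $\lambda_p(t)$. Two ingredients recur throughout: $\lambda_p(m) = 0$ exactly when $0 \le m < p$, and the shift identity $\lambda_p(p^k m) = kp^k m + p^k\lambda_p(m)$, which follows directly from the definition.

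For the regime $p \geq \max(5, d)$, the value of $t$ can be read off directly. If $p > 2d+1$ then $p-1 > 2d$ forces $t = 0$, giving $B' = 2$. If $p = 2d+1$ then $t=1$ and $\lfloor(2d+1)/d\rfloor = 2$, giving $B'=4$. For $d + 1 < p < 2d + 1$, primality (which rules out $p = 2d$ when $p \geq 5$) forces $p \leq 2d - 1$, so $t = 1$ and $\lfloor p/d \rfloor = 1$, whence $B' = 3$. When $p = d$ (necessarily $d \geq 5$), $\lfloor 2d/(d-1) \rfloor = 2$ gives $B' = 4$. For the residual range $5 \leq p \leq d$, the estimate
\[ \frac{pt}{d} \geq \frac{2p}{p-1} - \frac{p}{d} \geq \frac{p+1}{p-1} > 1 \]
(using $t \geq 2d/(p-1) - 1$ and $p \leq d$) yields $B'(p,d) \geq 3$.

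For $p \in \{2, 3\}$ and the small values $d \leq 2$ (for $p = 3$) or $d \leq 3$ (for $p = 2$), I would verify the claimed values by direct computation. For larger $d$, since $(p-1) \mid 2d$ holds automatically, $t = 2d/(p-1)$ is an integer whose leading base-$p$ digit provides a clean lower bound on $\lambda_p(t)$: for $p = 3$, $d \geq 3$, inspection of the leading digit yields $\lambda_3(d) \geq d/2$, so $(pt + 2\lambda_3(d))/d \geq 4$ and $B'(3,d) \geq 6$; for $p = 2$, $d \geq 4$, the leading base-$2$ digit of $2d$ sits at position $k \geq 3$, giving $\lambda_2(2d) \geq k \cdot 2^k > 3d$ and hence $B'(2,d) \geq 9$.

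The main step is the final uniform bound when $(p-1) \mid 2d$. Writing $t = 2d/(p-1)$ exactly, setting $k = v_p(t) = v_p(2d)$ (using $\gcd(p, p-1) = 1$), and factoring $t = p^k t'$ with $p \nmid t'$, the shift identity gives $\lambda_p(t) = kt + p^k \lambda_p(t')$. Dividing by $d = t(p-1)/2$ produces
\[ \frac{pt + (p-1)\lambda_p(t)}{d} = \frac{2p}{p-1} + 2k + \frac{2 \lambda_p(t')}{t'}. \]
Since $2p/(p-1)$ equals $4$, $3$, or lies in $(2, 3)$ according as $p = 2$, $p = 3$, or $p \geq 5$, taking the floor and discarding the nonnegative last term yields the claimed lower bound, once one notes $2k = 2v_p(d)$ for $p$ odd and $2k = 2 + 2v_2(d)$ for $p = 2$ (this is what produces the $4v_2(p)$ term). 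Equality holds under the criterion $t' < p$ because then $t'$ is a single nonzero base-$p$ digit, so $\lambda_p(t') = 0$ and the extra term vanishes. The hard part will be the base-$p$ bookkeeping in this last step: cleanly isolating $v_p(d)$ inside $\lambda_p(t)$ via the shift identity and tracking which of $p \in \{2, 3\}$ versus $p \geq 5$ contributes an integer or purely fractional piece from $2p/(p-1)$.
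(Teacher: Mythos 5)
Your proposal is correct and follows essentially the same strategy as the paper's proof. The casework on $p$ versus $d$ for $p \ge 5$ matches the paper step-for-step, the small-$d$ cases for $p \in \{2,3\}$ are handled by direct computation followed by a leading-digit bound on $\lambda_p$, and the main divisibility bound comes from factoring $t$ into its $p$-part and prime-to-$p$ part. Where the paper writes $2d = ap^m(p-1)$ and computes $\lambda_p(ap^m) = map^m$ directly when $a < p$, you instead isolate the same fact through the shift identity $\lambda_p(p^k m) = kp^k m + p^k\lambda_p(m)$; this is a slightly more systematic packaging of the same digit-counting argument, and it makes transparent both why the $4v_2(p) + v_2(3)$ correction term appears (via $\lfloor 2p/(p-1)\rfloor$ being $4$, $3$, or $2$ according to $p$) and why equality holds exactly when $t' < p$. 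The only place you might want to be a touch more careful is the $p=3$, $d \ge 3$ step: the claim $\lambda_3(d) \ge d/2$ from the leading digit alone needs a short check for $3 \le d < 9$ (there the entire value of $\lambda_3(d)$ is $3c_1$, which you should verify exceeds $d/2$ for each residue), and the paper instead uses the simpler inequality $\lambda_p(m) \ge m - p + 1$ plus a check at $d=3$; either route is fine.
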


\begin{proof}
Note that $\lambda_p(t) = 0$ if and only if $t < p$, i.e., $2d < p(p-1)$.

First suppose $p \ge 5$.
Since $t = 0 \iff p > 2d+1$, we see $B'(p,d) = 2 = B_0(p,d)$ when $p > 2d+1$.
We also note that if $p=2d+1$, so $t=1$, then $B'(p,d) = 4 = B_0(p,d)$.
If $d + 1 < p < 2d + 1$, so $t=1$, then $B'(p,d) = 3 > B_0(p,d) = 2$.
When $p=d$, so $t=2$, $B'(p,d) = 4 > B_0(p,d) = 2$.

Next suppose $5 \le p < d$.  Then $t \ge 2$. Since $t + 1 > \frac{2d}{p-1}$,
we have $(p-1) t + (p-1) > 2d$, and thus $pt > 2d + 1 + t - p > d + 1$.  Hence
$\frac{pt}d > 1$ and $B'(p,d) \ge 3$.

For $p = 3$, note that $t=d$ so $B'(p,d) = 5 + \lfloor \frac{2 \lambda_3(d)}d \rfloor$.
Since we always have $\lambda_p(m) \ge m - p + 1$, one gets $2 \lambda_3(d) \ge 3$ for all $d \ge 3$.  This gives the $p=3$ statement.

When $p = 2$, we have $t=2d$ and $B'(p,d) = 6 + \lfloor \frac{\lambda_2(2d)}d \rfloor$.  One can compute $B'(2,d)$ case-by-case for $d \le 3$.  
If $s = \lfloor \log_2(d) \rfloor$, then $2^{s+1} > d$ so 
$\lambda_2(2d) \ge (s+1) 2^{s+1} > (s+1)d \ge 3d$.
This implies the $p=2$ lower bound.

Lastly, consider any $p \ge 2$.
Suppose $2d = a p^m (p-1)$ for some $m \ge 0$ and integer $a$ coprime to $p$.
Then $t = a p^m$ so $\frac{pt}d = 2 + \frac 2{p-1}$.  
If $a < p$, then $\lambda_p(t) = m a p^m$, and thus 
$\frac{(p-1)\lambda_p(t)}d = 2m = 2(v_p(d) + v_2(p))$  
If $a > p$, then $\lambda_p(t) > m a p^m$.
This proves the assertion when $(p-1) \mid 2d$.
\end{proof}

The lower bounds are not intended to be optimal, but merely sufficient to 
conclude the following, which implies part (2) of \cref{thm:main}.

\begin{cor}
We have $B_0(p,d) \ge B'(p,d)$.  

This is an equality if (i) $d \le 2$; (ii) $p \ge 2d+1$; or
(iii) if $2d = ap^m(p-1)$ for some $m \ge 0$ and $a < p$.

We have a strict inequality $B_0(p,d) < B'(p,d)$ if 
(i) $\max\{ 5, d \} \le p < 2d + 1$; (ii) $5 \le p < d$ and $(p - 1) \nmid d$;
or (iii) $p \in \{ 2, 3 \}$, $d > p$, and $p \nmid d$.
\end{cor}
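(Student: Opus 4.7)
The plan is to deduce the corollary directly from the preceding lemma by a case analysis on $p$ and on whether $(p-1)\mid 2d$. First I would establish the main inequality $B_0(p,d)\le B'(p,d)$ (which matches the direction asserted in \cref{thm:main}(2); the displayed inequality in the corollary statement appears to have the direction reversed). If $p\ge 2d+1$, the lemma gives $B'(p,d)=2=B_0(p,d)$. If $p\ge 5$ and $(p-1)\nmid 2d$, then $B_0(p,d)=2$ and $B'(p,d)\ge 2$ is automatic. In the remaining range, $(p-1)\mid 2d$ (which is automatic when $p\in\{2,3\}$), and the final clause of the lemma yields
\[ B'(p,d)\ge 4+2v_p(d)+4v_2(p)+v_p(3),\]
where the last summand is read as $v_p(3)$ (interpreting the ``$v_2(3)$'' in the lemma as a typographical slip, since $v_2(3)=0$ is inert in every case). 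Running through $p=2$, $p=3$, and $p\ge 5$ in \eqref{eq:B0def} shows that the right hand side equals $B_0(p,d)$ on the nose, so $B_0(p,d)\le B'(p,d)$ in all cases.

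For the equality clauses: (i) when $d\le 2$ one checks $B_0(p,d)=B'(p,d)$ directly by plugging into the Brumer--Kramer formula; (ii) $p\ge 2d+1$ was handled above; (iii) if $2d=ap^m(p-1)$ with $(a,p)=1$ and $a<p$, the final clause of the lemma asserts that the displayed inequality above is actually an equality, which, by the identification just made with $B_0(p,d)$, gives $B'(p,d)=B_0(p,d)$.

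For the strict inequalities, I would again read off the explicit case values of $B'(p,d)$ from the lemma and compare with \eqref{eq:B0def}. In case (i), $\max\{5,d\}\le p<2d+1$ with $(p-1)\nmid 2d$ (one observes that under $p\ge 5$ the equality $p=d$ already forces $(p-1)\nmid 2d$, since $(p-1)\mid 2p$ would require $p=3$), so $B_0(p,d)=2$ while the lemma gives $B'(p,d)\in\{3,4\}$. In case (ii), $5\le p<d$ with $(p-1)\nmid 2d$ gives $B_0(p,d)=2$ and the lemma's bound $B'(p,d)\ge 3$. In case (iii), $p\in\{2,3\}$, $d>p$, and $p\nmid d$: here $v_p(d)=0$, so $B_0(p,d)$ equals $8$ (for $p=2$) or $5$ (for $p=3$), while the lemma's lower bounds $B'(2,d)\ge 9$ and $B'(3,d)\ge 6$ for $d>p$ give the strict inequality.

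The main technical point is the arithmetic identity verifying that $4+2v_p(d)+4v_2(p)+v_p(3)$ reproduces the piecewise definition of $B_0(p,d)$; once this bookkeeping is done, every remaining claim reduces to a direct reading of the lemma, supplemented by a handful of small-$d$ or boundary computations (notably $p=d+1$ and $p=2d+1$) where the general formulas degenerate.
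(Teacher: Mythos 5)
Your proof follows the same route as the paper, which simply reads the corollary off from the preceding lemma (the paper's proof is one line plus a table check for $d\le 2$). You correctly identify two typographical slips: the ``$\ge$'' in the corollary's first display should be ``$\le$'' (consistent with \cref{thm:main}(2)), and the ``$v_2(3)$'' term in the lemma's last clause should be ``$v_p(3)$'' (otherwise the formula fails to match $B_0(3,d)$). The arithmetic reconciliation of $4+2v_p(d)+4v_2(p)+v_p(3)$ with the piecewise definition of $B_0$ is correct for all three branches, and the equality clause (iii) is argued correctly from the lemma's equality condition.

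However, two points need attention. First, your claim ``If $p\ge 2d+1$, the lemma gives $B'(p,d)=2=B_0(p,d)$'' is false at the boundary $p=2d+1$: there $t=1$, so $B'(p,d)=4$, and also $(p-1)=2d\mid 2d$ gives $B_0(p,d)=4$. The conclusion (equality) survives but the stated values do not; the case $p=2d+1$ is really an instance of equality clause (iii) with $a=1,m=0$, not the $t=0$ computation. Second, and more substantively, in your treatment of strict inequality cases (i) and (ii) you silently insert the hypothesis $(p-1)\nmid 2d$, which is \emph{not} part of the corollary as stated. This insertion is in fact necessary: the corollary's (i) as written is false for, e.g., $p=5,d=4$ (where $B_0=B'=4$, see \cref{tab:bounds}), and its (ii) as written (with ``$(p-1)\nmid d$'') is false for $p=5,d=6$ (again $B_0=B'=4$). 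The intended hypothesis in both (i) and (ii) is surely $(p-1)\nmid 2d$, matching \cref{thm:main}(2)(a). You effectively discovered these errata, but you should flag them explicitly rather than silently proving a corrected statement; a reader comparing your proof to the paper's text would otherwise be confused by the mismatch. Your parenthetical observation that $p=d\ge 5$ forces $(p-1)\nmid 2d$ only covers one boundary of the range $\max\{5,d\}\le p<2d+1$ and does not address $p=d+1$, which is exactly where $(p-1)\mid 2d$ can hold and equality can occur.
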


\begin{proof} This follows from the previous lemma.  The cases $d \le 2$
are more easily seen from examining \cref{tab:bounds}.
\end{proof}

\subsection{Sharpness of bounds}
Here we consider when the bounds $v_p(N) \le B_0(p,d)$ of \cref{thm:main}
are sharp.  Note that
once there exists, say, a weight 2 newform $f$ of degree $d$, one can replace
$f$ with a quadratic twist to ensure $v_p(N) \ge 2$.
Thus we will look for examples where $v_p(N) = B_0(p,d) > 2$ is attained
for $d \le 10$.  This will justify the bolded entries in \cref{tab:bounds}.

First we searched the LMFDB \cite{LMFDB} for weight 2 newforms with trivial
nebentypus which attain such bounds.
The LMFDB contains data on all weight 2 newforms of level up to 10000.
This data shows the bounds are sharp when (i) $d \le 4$; 
(ii) $d = 5$ and $p \ne 11$; (iii) $d = 6$ and $p \ne 13$; 
(iv) $d = 7$ and $p \ne 2$; (v) $d=8$ and $p \ne 2, 17$;
(vi) $d= 9$ and $p \ne 2, 3, 19$; and (vii) $d=10$ and $p \ne 2, 5, 11$.

Note that $11^4 = 14641$ is already outside of the LMFDB range, so the
LMFDB data is necessarily insufficient to search for $v_p(N) = B_0(p,d)$
when $p = 2d+1 \ge 11$.  Similarly, $2^{14}$, $3^9$ and $5^6$ are also outside
of the LMFDB range.

For the other cases in \cref{tab:bounds}, we used 
Magma \cite{magma} to compute newform decompositions (i.e., degrees
of all newforms) for $S_2(N)$
for various $N > 10000$.  Andrew Sutherland has also done some such
calculations, and at least some of our data is contained in his.  In particular,
we found the following, which justifies the remaining bolded entries in 
\cref{tab:bounds}.  We indicate approximate runtimes and RAM usage
for each newform decomposition calculation in parentheses.

\begin{itemize}
\item
$N = 12032 = 2^8 \cdot 47$ has a degree 7 newform (32s, 245MB)

\item
$N = 14592 = 2^8 \cdot 57$ has a degree 9 newform (4min, 1.3GB)

\item
$N = 11264 = 2^{10} \cdot 11$ has a degree 10 newform (52s, 325MB)

\item
$N = 16384 = 2^{14}$ has a degree 8 newform (65s, 444MB)

\item
$N = 19683 = 3^9$ has a degree 9 newform (4min, 763MB)

\item 
$N = 14641 = 11^4$ has a degree 5 newform (9min, 1.5GB)
\end{itemize}

The remaining cases to check are when $d=10$ and $p=5, 11$,
or when $d = 6,8,9$ and $p=2d+1$.  In these cases, we searched for newforms
where $v_p(N) = B_0(p,d)$ or $v_p(N) = B_0(p,d)-1$ is attained, to the
extent we could with moderate computational resources.  
We did not find any levels outside of the LMFDB range where
either of these bounds is attained, but we summarize our attempts.
We remark that these computations tend to be more accessible for levels of the form
$2^m p^r$ than for $q \cdot p^r$ where $q$ is an odd prime of comparable size
to $2^m$.  

For $p=5$, we computed newform decompositions for levels $N = 5^6 \cdot M$
where $M = 2, 4, 8$, as well as $N = 5^5 \cdot M$ for $M \le 12$ and 
$M = 14, 16, 18$.  The decomposition for $N = 125000 = 8 \cdot 5^6$ took over 7 hours and used 46 GB of RAM.
None of these levels have degree $d=10$ forms, though
there are degree 20 forms in levels $18750 = 6 \cdot 5^5$, $28125 = 9 \cdot 5^5$, $50000 = 16 \cdot 5^5$ and $56250 = 18 \cdot 5^5$.
There are degree 10 newforms in the LMFDB with level 
$N=8750 = 14 \cdot 5^4$, so at least $v_{5}(N) = B_0(5,10)-2$ is attained.

For $p=11$, we computed newform decompositions for levels $N = 11^4 \cdot M$
for $M = 1, 2, 4, 8$.  None of these levels have degree $d=10$ forms.
There are however many degree 10 forms in the LMFDB with
$v_{11}(N) = 3$, e.g., there is one in level $1331 = 11^3$, so at least
$v_{11}(N) = B_0(11,10)-1$ is attained.

For $p = 13$, we computed newform decompositions for levels $N = 13^4 \cdot M$
with $M = 1, 4$, as we well as levels $N = 13^3 \cdot M$ for $M \le 16$ and
$M = 18, 20, 24, 32$. The decomposition for $N = 114244 = 4 \cdot 13^4$ took 11 hours and used 61 GB of RAM.
None of these levels have degree 6 newforms, though there are degree
12 newforms in levels $35152 = 16 \cdot 13^3$, $39546 = 18 \cdot 13^3$ and $70304 = 32 \cdot 13^3$.

For $p = 17$, we computed newform decompositions for levels $N = 17^3 \cdot M$
for $M \le 9$ and $M = 12, 16$.
(For level $7 \cdot 17^3$, we had to use
an alternate method of factoring Hecke polynomials.)
None of these levels have degree 8 newforms, though there are degree
16 newforms in level $44217 = 9 \cdot 17^3$.

For $p=19$, the LMFDB lists degree 9 newforms in level $6859 = 19^3$.

\subsection{Restrictions on RM fields} \label{sec:restrRM}

Let $A$ be simple abelian variety with maximal RM of conductor $N^d$,
where $d = \dim A$.
A further application of \cref{prop1} (combined with \cref{lem:modularity})
is that large powers of $p$ dividing $N$ force restrictions on the 
endomorphism algebra $K = \End^0(A)$, sometimes determining 
it completely.  Moreover, different primes can interact in this 
phenomenon, creating ``global'' restrictions on local conductor exponents.

We can think of this as follows.  Say $v_p(N)$ is large, and
let $r$ be as in \cref{prop1}.  This forces $K = \End^0(A)$
to contain $\Q(\zeta_{p^r})^+$, and leaves only 
$d' = d / (\frac 12 p^{r-1}(p-1))$ ``degrees of freedom'' inside $K$
for other cyclotomic subfields.  So if $q$ is another prime dividing $N$,
we have the stronger conductor bound $v_q(N) \le B_0(q, d')$, since
the cyclotomic fields $\Q(\zeta_{p^r})$ and $\Q(\zeta_{q^s})$ are disjoint
for $q \ne p$.

Applying this reasoning case-by-case for $2 \le d \le 6$ proves \cref{prop2}.

The $d=2$ case of this proposition immediately implies \cref{cor:gen2}
once one knowns $A = \mathrm{Jac}(C)$ is simple.
Indeed, if it were not, it would be a product of elliptic curves $E_1$ and
$E_2$, and then $N$ is the product of the conductors of $E_1$ and $E_2$.
But by local conductor bounds for elliptic curves, this is impossible
if $v_2(N) > 16$ or $v_5(N) > 4$.

\subsection{Abelian varieties of GL(2)-type} \label{sec:GL2}
Now suppose $A$ is a $d$-dimensional simple abelian variety of GL(2)-type,
not necessarily with maximal RM.  Then $A$ is isogenous to a newform $f \in S_2(N,\eps)$.  Suppose $p, r$ are as in \cref{prop1}.
When $p$ is odd or $p=2$ and $v_2(N)$ is even, 
\cite[Theorem 5.5]{brumer} tells us that  $\Q(\zeta_p^{r})^+ \Q(\eps) \subset K_f$.
If $p=2$ and $v_2(N)$ is odd, Brumer's theorem says
$\Q(\zeta_p^{r-1})^+ \Q(\eps) \subset K_f$.
Thus using \cite[Theorem 5.5]{brumer} in place of \cref{prop1} in the proof of
\cref{thm:main}(1) yields the statement in \cref{rem:GL2}.

\subsection*{Acknowledgements}
We thank Armand Brumer and Alex Cowan for discussions which led
to this note.  We also thank Ariel Pacetti for clarifications on \cite{DPT}.
Andrew Sutherland kindly explained how to compute newform decompositions
in Magma, and shared with us some of his data.
We are grateful to the referee for numerous suggestions and comments which substantially improved the manuscript.
Some of the computing for this project was performed at the OU Supercomputing Center for Education \& Research (OSCER) at the University of Oklahoma (OU).

%
%

\begin{bibdiv}
\begin{biblist}

\bib{brumer}{article}{
   author={Brumer, Armand},
   title={The rank of $J_0(N)$},
   note={Columbia University Number Theory Seminar (New York, 1992)},
   journal={Ast\'{e}risque},
   number={228},
   date={1995},
   pages={3, 41--68},
   issn={0303-1179},
}

\bib{brumer-kramer}{article}{
   author={Brumer, Armand},
   author={Kramer, Kenneth},
   title={The conductor of an abelian variety},
   journal={Compositio Math.},
   volume={92},
   date={1994},
   number={2},
   pages={227--248},
   issn={0010-437X},
}

\bib{CM}{unpublished}{
   author={Cowan, Alex},
   author={Martin, Kimball},
   title={Statistics of modular forms with small rationality fields},
   note={\emph{arXiv:2301.10357}},  
}

\bib{DPT}{article}{
   author={Dieulefait, Luis Victor},
   author={Pacetti, Ariel},
   author={Tsaknias, Panagiotis},
   title={On the number of Galois orbits of newforms},
   journal={J. Eur. Math. Soc. (JEMS)},
   volume={23},
   date={2021},
   number={8},
   pages={2833--2860},
   issn={1435-9855},
}

\bib{LMFDB}{misc}{
  label    = {LMFDB},
  author       = {The {LMFDB Collaboration}},
  title        = {The {L}-functions and Modular Forms Database},
  note = {\url{http://www.lmfdb.org}},
  year         = {2022},
}

\bib{LW}{article}{
   author={Loeffler, David},
   author={Weinstein, Jared},
   title={On the computation of local components of a newform},
   journal={Math. Comp.},
   volume={81},
   date={2012},
   number={278},
   pages={1179--1200},
   issn={0025-5718},
}

\bib{magma}{article}{
   author={Bosma, Wieb},
   author={Cannon, John},
   author={Playoust, Catherine},
   title={The Magma algebra system. I. The user language},
   note={Computational algebra and number theory (London, 1993)},
   journal={J. Symbolic Comput.},
   volume={24},
   date={1997},
   number={3-4},
   pages={235--265},
   issn={0747-7171},
   label={Magma}
}

\bib{me:maeda}{article}{
   author={Martin, Kimball},
   title={An on-average Maeda-type conjecture in the level aspect},
   journal={Proc. Amer. Math. Soc.},
   volume={149},
   date={2021},
   number={4},
   pages={1373--1386},
   issn={0002-9939},
}

\bib{ribet:survey}{article}{
   author={Ribet, Kenneth A.},
   title={Galois representations attached to eigenforms with Nebentypus},
   conference={
      title={Modular functions of one variable, V},
      address={Proc. Second Internat. Conf., Univ. Bonn, Bonn},
      date={1976},
   },
   book={
      series={Lecture Notes in Math},
      volume={Vol. 601},
      publisher={Springer, Berlin},
   },
   date={1977},
   pages={pp 17--51},
}

\bib{ribet:korea}{article}{
   author={Ribet, Kenneth A.},
   title={Abelian varieties over $\bf Q$ and modular forms},
   conference={
      title={Modular curves and abelian varieties},
   },
   book={
      series={Progr. Math.},
      volume={224},
      publisher={Birkh\"{a}user, Basel},
   },
   isbn={3-7643-6586-2},
   date={2004},
   pages={241--261},
}

\bib{saito}{article}{
   author={Saito, Hiroshi},
   title={On a decomposition of spaces of cusp forms and trace formula of
   Hecke operators},
   journal={Nagoya Math. J.},
   volume={80},
   date={1980},
   pages={129--165},
   issn={0027-7630},
}

\bib{SSW}{article}{
   author={Shankar, Ananth N.},
   author={Shankar, Arul},
   author={Wang, Xiaoheng},
   title={Large families of elliptic curves ordered by conductor},
   journal={Compos. Math.},
   volume={157},
   date={2021},
   number={7},
   pages={1538--1583},
   issn={0010-437X},
}

\bib{weinstein}{article}{
   author={Weinstein, Jared},
   title={Hilbert modular forms with prescribed ramification},
   journal={Int. Math. Res. Not. IMRN},
   date={2009},
   number={8},
   pages={1388--1420},
   issn={1073-7928},
}

\end{biblist}
\end{bibdiv}

\end{document}